 \theoremstyle{plain}
 \newtheorem{theorem}{Theorem}[section]
 \newtheorem{lemma}{Lemma}[section]
 \newtheorem{proposition}{Proposition}[section]
\theoremstyle{definition}
 \newtheorem{definition}{Definition}[section]
\theoremstyle{remark}
\journal{arXiv}
\begin{document}

\begin{frontmatter}
 \title{A weak law of large numbers for the sequence of uncorrelated fuzzy random variables }
\author[label3]{Li Guan}
\address[label3]{College of Statistics and Data Science, Faculty of Science, Beijing University of Technology,100 Pingleyuan, Chaoyang District, Beijing, 100124,
P.R.China}
\ead{guanli@bjut.edu.cn}
\author[label1]{Jinping Zhang*\footnote{*Corresponding author: Jinping Zhang}}
 \address[label1]{Department of Mathematics and Physics, North China Electric Power University,
Beijing, 102206, P.R.China}
\ead{zhangjinping@ncepu.edu.cn}
\author[label2]{Jieming Zhou}
\address[label2]{MOE-LCSM, School of Mathematics and Statistics, Hunan Normal University, Changsha, Hunan, 410081, P. R. China}
\ead{jmzhou@hunnu.edu.cn}
\begin{abstract}
 We shall prove a weak law of large numbers for the uncorrelated (see Definition \ref{def:uncorrelated}) fuzzy random variable sequence with respect to the  uniform Hausdorff metric $d_H^{\infty}$, which  is an extension of weak law of large numbers for independent fuzzy random variables.
\end{abstract}

\begin{keyword}
  Fuzzy random variable \sep  Uncorrelated \sep Law of large numbers
\MSC Primary 60D05 \sep Secondary 03E72 \sep 54C65
\end{keyword}

\end{frontmatter}


\section{Introduction}
\label{author_sec:1}
Limit theory is an important topic since sometimes we need to consider the asymptotic behavior or convergence property in applied fields such as stochastic control, mathematical finance, statistics, operational research and optimization  etc. The law of large numbers (LLN) is the important limit theorem with wide application in solving practical problems.

  Fuzzy random variable is the natural extension of random set (or set-valued random variable). A usual way to study  fuzzy random variable is to consider its $\alpha$-level sets, which is a set-valued random variable for each $\alpha$. The law of large numbers for set-valued and fuzzy random variables has received much attention since  Artstein and Vitale \cite{AV} proved the first LLN for compact random sets. For example, Hiai \cite{Hiai85} in 1985 proved the SLLN (strong law of large numbers) for set-valued random variables in the Mosco convergence. Uemura \cite{Uemura93} obtained a law of large numbers for random sets taking values in a class of subsets larger than the class of compact subsets of a Banach space.
  Detail review concerning  LLN for set-valued random variables earlier than 2002 can be referred to the book \cite{LiOgV}. Guan et al. \cite{Guan07} obtained SLLN for weighted sums of set-valued random variables in Rademacher type p Banach space. There are other references studied LLN for set-valued random variables. By using $\alpha$-level sets, some results for set-valued random variables were extended to the fuzzy case. For instance, Klement et al. \cite{Klement} (1986) obtained a SLLN for independent and identically distributed compact fuzzy random variables. Inoue \cite{Inoue91} (1991) studied SLLN for independent tight fuzzy random variables. Kim \cite{kim} proved a SLLN for independent and  identically distributed fuzzy random variables using a different metric. Guan and Li \cite{Li04} studied LLN  for weighted sums of fuzzy random variables. Li and Ogura \cite{LiOgsl} obtained the SLLN for independent (not necessarily identical distributed) fuzzy random variables. Ter$\acute{a}$n \cite{Teran2008} studied SLLN for t-normed arithmetics.

 In this paper, at first we propose the definition of
 uncorrelated fuzzy random variables (see Definition \ref{def:uncorrelated}) in the sense of level-wise by considering the $\alpha$-level set. For two fuzzy random variables, uncorrelation is weaker than independence. Under the weaker condition, then we shall prove a weak law of large numbers for the sequence of fuzzy random variables in the real line $\mathbb R$ with respect to the uniform Hausdorff metric $d_{H}^{\infty}$, which is different from the existing literature.

  The rest of the paper is organized as follows: Section \ref{sec2} contributes to preliminaries on set-valued and  fuzzy random variables. In Section \ref{sec3}, we shall present the main result.

\section{Preliminaries}
\label{sec2}

Throughout this paper, $(\Omega, {\mathscr A}, P)$ denotes a
nonatomic complete probability space. $\mathbb R$ is the set of real numbers. $K({\mathbb R})$ denotes the family of all nonempty closed subsets of $\mathbb R$. $K_k({\mathbb R})$ is the family of all nonempty compact subsets of ${\mathbb R}$, and $K_{kc}({\mathbb R})$ is the family of all nonempty compact convex subsets of ${\mathbb R}$.

For any  $A, B\in K({\mathbb R})$ and
$\lambda\in \mathbb R$, the addition and
scalar multiplication are defined as follows:
$$A+B=\{a+b:a\in A, b\in B\},$$
$$\lambda A=\{\lambda a:a\in A\}.$$

The Hausdorff metric on $K({\mathbb R})$ is defined by
$$
d_{H}(A,B)=\max\{\sup\limits_{a\in A}\inf\limits_{b\in B}|a-b|,\
\sup\limits_{b\in B}\inf\limits_{a\in A}|a-b|\}
$$
for $A,\ B\in K({\mathbb R})$. For $A\in K({{\mathbb R}})$,  define
$\|A\|_{\bf K}:=d_{H}(\{0\},A)$. It is known that the metric space $(K_k({\mathbb R}),d_H)$
is complete and separable, and $K_{kc}({\mathbb R})$ is a closed subset of
$(K_k({\mathbb R}),d_H)$ (cf. \cite{LiOgV}, Theorems 1.1.2 and 1.1.3).

Now we give a property of Hausdorff metric needed later, which appeared in \cite{LiOgsl} without given proof.
\begin{proposition}\label{proposition1}
Let $A_{1}\subset A_{2}\subset A_{3}$ and $B_{1}\subset B_{2}\subset B_{3}$. All of them belong to $K_{k}(\mathbb R)$. Then we have
$$d_{H}(A_2, B_2)\leq d_{H}(A_1, B_3)+d_{H}(A_3, B_1),$$
where $d_{H}(x, A)=\inf_{a\in A}|x-a|$ for $A\subset\mathbb R$.
\begin{proof}
 $d_{H}(A_{2}, B_{2})<\infty$ since both $A_2$ and $B_2$ are compact. By virtue of Theorem 1.1.14 in \cite{LiOgV},
$$
d_{H}(A_{2}, B_{2})=\sup_{x\in \mathbb R}\left\{|d
(x, A_2)-d(x, B_2)|\right\}.$$
For $x\in\mathbb R$, we have
$$
d(x, A_{2})-d_{H}(x, B_{2})\leq d(x, A_{1})-d(x, B_{3})
$$
and
$$
d(x, A_{2})-d_{H}(x, B_{2})\geq d(x, A_{3})-d(x, B_{1}).
$$
Then
$$
|d(x, A_{2})-d(x, B_{2})|\leq \max \left\{|d(x, A_{1})-d(x, B_{3})|, |d(x, A_{3})-d(x, B_{1})|\right\}.
$$
Furthermore,
\begin{equation}\label{eqn:0}
\begin{split}
d_{H}(A_{2}, B_{2})&=\sup_{x\in\mathbb R}\left\{|d(x, A_{2})-d(x, B_{2})|\right\}\\
&\leq \sup_{x\in\mathbb R}\max \left\{|d(x, A_{1})-d(x, B_{3})|, |d(x, A_{3})-d(x, B_{1})|\right\}\\
&\leq \sup_{x\in\mathbb R}\left\{|d(x, A_1)-d(x, B_3)|\right\}+\sup_{x\in\mathbb R}\left\{|d(x, A_3)-d(x, B_1)|\right\}\\
&=d_{H}(A_{1}, B_{3})+d_{H}(A_{3},B_{1}).
\end{split}
\end{equation}
\end{proof}
\end{proposition}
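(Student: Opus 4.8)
The plan is to reduce the set-level statement to an inequality between scalar distance functions, using the representation of the Hausdorff metric recorded as Theorem 1.1.14 in \cite{LiOgV}, namely $d_H(A,B)=\sup_{x\in\mathbb R}|d(x,A)-d(x,B)|$ where $d(x,C)=\inf_{c\in C}|x-c|$. Since all six sets are nonempty and compact, every Hausdorff distance appearing in the claim is finite, so one may freely manipulate differences of the relevant distance functions without worrying about an $\infty-\infty$ indeterminacy. I would note this finiteness at the outset.

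The structural fact to exploit is that $C\mapsto d(x,C)$ is antitone under inclusion: if $C\subset D$ then $d(x,D)\le d(x,C)$ for every $x$, because the infimum defining the distance runs over a larger set. Applied to the two nested chains, this yields, for each fixed $x$, the orderings $d(x,A_3)\le d(x,A_2)\le d(x,A_1)$ and $d(x,B_3)\le d(x,B_2)\le d(x,B_1)$.

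The heart of the argument is a sandwich on the scalar quantity $d(x,A_2)-d(x,B_2)$. Replacing the $A$-term by its upper estimate $d(x,A_1)$ and the $B$-term by its lower estimate $d(x,B_3)$ gives $d(x,A_2)-d(x,B_2)\le d(x,A_1)-d(x,B_3)$; the opposite replacements give $d(x,A_2)-d(x,B_2)\ge d(x,A_3)-d(x,B_1)$. A real number lying in the interval with endpoints $d(x,A_3)-d(x,B_1)$ and $d(x,A_1)-d(x,B_3)$ has absolute value at most the larger of the two endpoints' absolute values, so writing $f(x)=|d(x,A_1)-d(x,B_3)|$ and $g(x)=|d(x,A_3)-d(x,B_1)|$ one obtains $|d(x,A_2)-d(x,B_2)|\le\max\{f(x),g(x)\}$. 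The only delicate point is the \emph{crossed} pairing that the nesting forces: $A_2$ is compared against $B_3$ on one side and against $B_1$ on the other, which is precisely why $A_1$ matches with $B_3$ and $A_3$ with $B_1$ in the final estimate rather than the ``parallel'' pairing one might first expect, and a sign slip in these two bounding inequalities is the one place the argument can go wrong.

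Finally I would take the supremum over $x\in\mathbb R$. Because $f$ and $g$ are nonnegative, $\max\{f(x),g(x)\}\le f(x)+g(x)$, and subadditivity of the supremum then gives $d_H(A_2,B_2)=\sup_x|d(x,A_2)-d(x,B_2)|\le \sup_x f(x)+\sup_x g(x)=d_H(A_1,B_3)+d_H(A_3,B_1)$, which is the desired inequality. I do not anticipate a genuine obstacle here: once the representation theorem is invoked, the result is a monotonicity-and-sandwich computation, and the only care needed is in the crossed pairing above and in remembering that the passage from $\max$ to a sum relies on nonnegativity.
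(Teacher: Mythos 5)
Your proposal is correct and follows essentially the same route as the paper's own proof: invoke the representation $d_H(A,B)=\sup_x|d(x,A)-d(x,B)|$, sandwich $d(x,A_2)-d(x,B_2)$ between $d(x,A_3)-d(x,B_1)$ and $d(x,A_1)-d(x,B_3)$ via the antitonicity of $d(x,\cdot)$ under inclusion, bound the absolute value by the maximum of the two endpoints' absolute values, and pass to the supremum using $\max\le$ sum. The crossed pairing you emphasize is exactly the one the paper uses, so there is nothing to add.
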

\remark  In general Banach space, the result also holds and further, from \eqref{eqn:0} we can get the stronger result
$$d_{H}(A_2, B_2)\leq \max \left\{d_{H}(A_1, B_3), d_{H}(A_3, B_1)\right\},$$
which was stated in \cite{Mol} without proof.

A set-valued mapping $F:\Omega\rightarrow K({\mathbb R})$ is called {\em
a set-valued random variable (or a random set)}, if for each open
subset $O$ of $X$, the inverse image $F^{-1}(O):=\{\omega \in \Omega: F(\omega)\cap O
\neq \emptyset \}$ belongs to ${\mathscr A}$.

The family of all integrable selections of $F$ is denoted by
$$S_F:=\left\{f\in L^1[\Omega;{\mathbb R}] :f(\omega)\in F(\omega) a.s.\right\},$$
where $L^1[\Omega;{\mathbb R}]$ is the family of all Lebesgue integrable (with respect to $P$) $\mathbb R$-valued functions.

A set-valued random variable $F$ is called {\em integrable} if $S_F$ is non-empty. It is called {\em integrably bounded}
 if
$\int_{\Omega}\|F(\omega)\|_{\bf K}dP <\infty$, which is equivalent to that $S_{F}$ is a bounded subset of $L^1[\Omega;{\mathbb R}]$ (cf. \cite{HiUm} or \cite{LiOgV}).
$L^1[\Omega, {\mathscr A},P; K({\mathbb R})]$ denotes the space of all integrably
bounded $K(\mathbb R)$-valued random variables. Similarly, we have notations $L^1[\Omega,{\mathscr A},P;  K_{k}({\mathbb R})]$ and $L^1[\Omega,{\mathscr A},P;  K_{kc}({\mathbb R})]$ respectively.

 For $F,G\in
L^1[\Omega,{\mathscr A},P;  K\left ({\mathbb R}\right)]$, $F=G$  means in the sense of $F(\omega)=G(\omega)~ a.s.$

For an integrable set-valued random variable $F$, its {\em expectation}, denoted by $E[F]$, is defined by Aumann in \cite{Au65} as following
$$
E[F]:=\Big\{\int_{\Omega}fdP:f\in S_{F}\Big\},
$$
where $\int_{\Omega}fdP$ is the usual Lebesgue integral.  $E[F]$ is also called the {\em Aumann integral} in literatures. Since here the underlying space is $\mathbb R$ and $(\Omega, \mathscr A, P)$ has no atom, it is known that the expectation $E[F]$ is a closed and convex subset of $\mathbb R$.

Let ${\mathcal F}_{k}({\mathbb R})$ be the family of all compact fuzzy sets:
$v:{\mathbb R}\rightarrow[0,1]$, where $v$ satisfies the following
conditions:\\
(1) The 1-level set $v_{1}=\{x\in {\mathbb R}:v(x)=1\}\neq \emptyset $.\\
(2) $v$ is upper semicontinuous, i.e. for each $\alpha\in
[0,1]$, the $\alpha$-level set $v_{\alpha}:=\{x\in
{\mathbb R}:v(x)\geq \alpha\}$ is a closed subset of ${\mathbb R}$.\\
(3) The support set $\textrm{cl}\{x\in {\mathbb R}:v(x)>0\}$ is
compact.

A fuzzy set $v$ in ${\mathcal F}_{k}({\mathbb R})$ is called {\em convex} if it satisfies
$$
v(\lambda x+(1-\lambda)y)\geq\min\{v(x),v(y)\},\ \mbox{for any}\ x,y
\in {\mathbb R}, \lambda\in [0,1].
$$
It is known that $v$ is convex if and only if each $\alpha$-level set
$v_{\alpha}~ (\alpha \in (0,1])$ is a convex subset of ${\mathbb R}$.
 ${\mathcal F}_{kc}({\mathbb R})$ denotes the class of all compact convex fuzzy sets.

The uniform metric $d_{H}^{\infty}$ (cf. \cite{PuRa86}) in ${\mathcal F}_{k}({\mathbb R})$ is defined as follows: for
$v^{1},v^{2}\in {\mathcal F}_{k}({\mathbb R})$,
$$
d_{H}^{\infty}(v^{1},v^{2}):=\sup\limits_{\alpha\in(0,1]}d_{H}(v^{1}_{\alpha},v^{2}_{\alpha}).
$$
Define the norm $\|v\|_{\bf
F}:=d_{H}^{\infty}(v,I_{0})=\sup_{\alpha>0}\|v_{\alpha}\|_{{\bf K}}$,
where $I_{0}$ is the indicator function of $\{0\}$. The space $({\mathcal F}_{k}({\mathbb R}),d_{H}^{\infty})$ is a complete
metric space  (cf. \cite{LiOg96}) but not separable in general (cf.
\cite{LiOgV}, Remark 5.1.7). Completeness was first proved by Puri
and Ralescu \cite{PuRa86} in the case of  the
d-dimensional Euclidean space $\mathbb R^d$.

It is well known that
$v_{\alpha}=\bigcap_{\beta<\alpha}v_{\beta}$, for every $\alpha\in
(0,1]$. We denote
$v_{\alpha+}=cl(\bigcup_{\beta>\alpha}v_{\beta}),$ for
$\alpha\in[0,1)$, which will be used later. Obviously, $v_{0+}$ is
 the support set of $v$. Due to the completeness of
$({\mathcal F}_{k}({\mathbb R}),d_{H}^{\infty})$, every Cauchy sequence
$\{v^{n}:n\in \mathbb{N}\}$ converges in ${\mathcal F}_{k}({\mathbb R})$ with respect to the metric $d_{H}^\infty$.

Now we present a result which will be used later.

\begin{lemma} \label{lem1} (cf. Lemma 2 of \cite{LiOgsl}) ~ Suppose a
sequence $\{v^{n}:n\in \mathbb{N}\}$ in ${\mathcal F}_{k}({\mathbb R})$ converges to $v$ in
${\mathcal F}_{k}({\mathbb R})$ with respect to $d_{H}^{\infty}$. Then for each
$\alpha\in[0,1)$, the sequence $\{v^{n}_{\alpha+}:n\in \mathbb{N}\}$
converges to a set $v_{\alpha^{*}}$ in $K_{k}({\mathbb R})$ with respect
to $d_{H}$. Further, $\lim_{\beta\downarrow\alpha}d_{H}(v_{\beta},
v_{\alpha^{*}})=0$, so that $v_{\alpha^{*}}=v_{\alpha+}$.
\end{lemma}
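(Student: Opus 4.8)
The plan is to prove the two assertions in turn: first that $\{v^{n}_{\alpha+}\}$ is $d_{H}$-convergent to some set, which we name $v_{\alpha^{*}}$, and then that this limit coincides with $v_{\alpha+}$ through the upper continuity of the level sets at $\alpha$.

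For the convergence, since $(K_{k}(\mathbb R),d_{H})$ is complete it suffices to show that $\{v^{n}_{\alpha+}\}$ is $d_{H}$-Cauchy, and the natural way is to dominate $d_{H}(v^{m}_{\alpha+},v^{n}_{\alpha+})$ by $d_{H}^{\infty}(v^{m},v^{n})$. I would use the representation $d_{H}(A,B)=\sup_{x}|d(x,A)-d(x,B)|$ (the same one invoked in Proposition \ref{proposition1}) together with the elementary identity $d(x,v_{\alpha+})=\inf_{\beta>\alpha}d(x,v_{\beta})$, which holds because distance to a set equals distance to its closure and distance to a union is the infimum of the distances. Combining this with $|\inf_{\beta}a_{\beta}-\inf_{\beta}b_{\beta}|\le\sup_{\beta}|a_{\beta}-b_{\beta}|$ gives, for each $x$, the bound $|d(x,v^{m}_{\alpha+})-d(x,v^{n}_{\alpha+})|\le\sup_{\beta>\alpha}|d(x,v^{m}_{\beta})-d(x,v^{n}_{\beta})|\le\sup_{\beta>\alpha}d_{H}(v^{m}_{\beta},v^{n}_{\beta})$, so taking the supremum over $x$ yields $d_{H}(v^{m}_{\alpha+},v^{n}_{\alpha+})\le d_{H}^{\infty}(v^{m},v^{n})$. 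As $\{v^{n}\}$ is $d_{H}^{\infty}$-Cauchy, $\{v^{n}_{\alpha+}\}$ is $d_{H}$-Cauchy and converges to some $v_{\alpha^{*}}\in K_{k}(\mathbb R)$; each $v^{n}_{\alpha+}$ lies in $K_{k}(\mathbb R)$ because $v^{n}_{1}\subset v^{n}_{\alpha+}$ is nonempty for $\alpha<1$.

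For the second assertion I would first record the single-set upper continuity: for each fixed $n$, $\lim_{\beta\downarrow\alpha}d_{H}(v^{n}_{\beta},v^{n}_{\alpha+})=0$. Since $v^{n}_{\beta}\subset v^{n}_{\alpha+}$, this reduces to $\sup_{c\in v^{n}_{\alpha+}}d(c,v^{n}_{\beta})\to0$, which follows by a compactness argument: any limit point of a putative violating sequence lies in the compact set $v^{n}_{\alpha+}$, hence can be approximated by a point of some $v^{n}_{\gamma_{0}}$ with $\gamma_{0}>\alpha$, and that point sits in $v^{n}_{\beta}$ as soon as $\beta<\gamma_{0}$. Then, given $\varepsilon>0$, I would fix $N$ large enough that simultaneously $d_{H}^{\infty}(v^{N},v)<\varepsilon/3$ and $d_{H}(v^{N}_{\alpha+},v_{\alpha^{*}})<\varepsilon/3$, and only afterwards choose $\beta$ close to $\alpha$ so that $d_{H}(v^{N}_{\beta},v^{N}_{\alpha+})<\varepsilon/3$; the triangle inequality $d_{H}(v_{\beta},v_{\alpha^{*}})\le d_{H}(v_{\beta},v^{N}_{\beta})+d_{H}(v^{N}_{\beta},v^{N}_{\alpha+})+d_{H}(v^{N}_{\alpha+},v_{\alpha^{*}})$ then gives $d_{H}(v_{\beta},v_{\alpha^{*}})<\varepsilon$, proving $\lim_{\beta\downarrow\alpha}d_{H}(v_{\beta},v_{\alpha^{*}})=0$.

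Finally I would identify $v_{\alpha^{*}}$ with $v_{\alpha+}$ straight from this last limit. For $b\in v_{\alpha^{*}}$, the inequalities $d(b,v_{\beta})\le d_{H}(v_{\beta},v_{\alpha^{*}})\to0$ and $v_{\beta}\subset v_{\alpha+}$ force $b\in v_{\alpha+}$; conversely any $b\in v_{\gamma_{0}}$ with $\gamma_{0}>\alpha$ lies in $v_{\beta}$ for $\beta<\gamma_{0}$, so $d(b,v_{\alpha^{*}})\le d_{H}(v_{\beta},v_{\alpha^{*}})\to0$ forces $b\in v_{\alpha^{*}}$, and taking the closure gives $v_{\alpha+}\subset v_{\alpha^{*}}$. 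The main obstacle is the careful order of quantifiers in the triangle-inequality step ($N$ must be frozen before letting $\beta\downarrow\alpha$) and the proof of the single-set upper continuity, where compactness of the support is essential; the domination bound in the first part is the other point requiring care but follows cleanly from the distance-function representation.
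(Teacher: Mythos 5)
The paper does not prove this lemma at all --- it is quoted from Lemma 2 of \cite{LiOgsl} --- so there is no in-paper argument to compare against. Your proof is correct and complete: the domination $d_{H}(v^{m}_{\alpha+},v^{n}_{\alpha+})\le d_{H}^{\infty}(v^{m},v^{n})$ via the identity $d(x,v_{\alpha+})=\inf_{\beta>\alpha}d(x,v_{\beta})$ and the representation $d_{H}(A,B)=\sup_{x}|d(x,A)-d(x,B)|$ (the same Theorem 1.1.14 of \cite{LiOgV} the paper uses in Proposition \ref{proposition1}) is sound, the compactness argument for $\lim_{\beta\downarrow\alpha}d_{H}(v^{n}_{\beta},v^{n}_{\alpha+})=0$ is valid, and you correctly freeze $N$ before sending $\beta\downarrow\alpha$ in the three-term triangle inequality, which is the one place such an argument typically goes wrong.
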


For any $v\in {\mathcal F}_{kc}({\mathbb R})$, define the support function of $v$ as
follows
$$s_v(x^*,\alpha)=\left\{\begin{array}{cc}
                        s(x^*,v_\alpha) & if\ \alpha>0,\\
                        s(x^*,v_{0+})   & if\ \alpha=0,\\
                        \end{array}\right.$$
for $(x^*,\alpha)\in S^*\times[0,1]$, where $S^*$ is the unit sphere of $\mathbb R^*$(
$\mathbb R^*=\mathbb R$ in the sense of isomorphism, but for the sake of clarity, we still use
$\mathbb R^*$ later) and $s(x^*, A)=\sup_{a\in A}x^*(a)$ for $x^*\in S^*$ and $A\subset \mathbb R$.

A mapping $X:\Omega\rightarrow {\mathcal F}({\mathbb R})$ is called a {\em fuzzy
set-valued random variable or a random upper semicontinuous
function} , if, for every $\alpha\in(0,1]$, $X_\alpha(\omega)=\{x\in
{\mathbb R}:X(\omega)(x)\geq\alpha\}$ is a set-valued random variable.

A  fuzzy random variable $X$ is called {\em integrably
bounded} if the real-valued random variable $\|X_{0+}(\omega)\|_{\bf
K}$ is integrable. Let $L^1[\Omega,{\mathscr A},P;{\mathcal F}_{k}({\mathbb R})]$ be the set
of all integrably bounded  fuzzy random variables and
$L^1[\Omega,{\mathscr A},P;{\mathcal F}_{kc}({\mathbb R})]$ be the set of all integrably
bounded  fuzzy random variables taking values in
${\mathcal F}_{kc}({\mathbb R})$. Two  fuzzy random variables $X, Y\in
L^1[\Omega,{\mathscr A},P;{\mathcal F}_{k}({\mathbb R})]$ are considered to be identical if
for any $\alpha\in[0,1], X_{\alpha}(\omega)=Y_{\alpha}(\omega)~
a.s.$

The {\em expectation} of a fuzzy random variable $X$,
denoted by $E[X]$, is an element in ${\mathcal F}_k({\mathbb R})$ such that, for every
$\alpha \in (0,1]$,
   $$
    (E[X])_{\alpha} = E[X_{\alpha}],
    $$
where the expectation of right hand side is the Aumann integral. From the
existence theorem (cf. \cite{LiOg96}), we can get an equivalent
definition: for any $x\in {\mathbb R}$,
        $$
         E(X)(x)=\sup\{\alpha\in [0,1]: x \in E[X_{\alpha}]\}.
        $$

  Note that $E[X]$ is always convex since $(\Omega, {\mathscr A}, P)$ is
nonatomic.

\section{Main Results}
\label{sec3}
\begin{definition}\label{def:uncorrelated}
Let $X^{1},X^{2}$ be  fuzzy random variables.  $X^{1}$ and $X^{2}$ are called {\em uncorrelated} if for any $\alpha\in (0,1]$, $X^1_\alpha$ and $X^2_\alpha$ are
 uncorrelated set-valued random variables. I.e. for each $x^*\in \mathbb R^*$, the real-valued random variables $s(x^*, X^1_\alpha)$ and $s(x^*, X^2_\alpha)$ are uncorrelated in the usual sense.

 Fuzzy random variables sequence  $X^1,X^2,\cdots$ are called {\em uncorrelated} if  the sequence $X^1_\alpha, X^2_\alpha,\cdots$ are pairwise uncorrelated for any $\alpha\in (0,1]$.
 \end{definition}

\begin{lemma}\label{lem2}
Let $\{X^n:n\in \mathbb N\}$ be a sequence of uncorrelated ${\mathcal F}_{kc}({\mathbb R})$-valued random variables. Then for each $\alpha\in(0,1]$, $\{X^n_{\alpha+}:n\in \mathbb N\}$ is a sequence of uncorrelated $K_{kc}({\mathbb R})$-valued random variables.
\end{lemma}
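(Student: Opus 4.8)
The plan is to reduce the assertion, level by level, to the known uncorrelatedness of the set-valued random variables $X^n_\beta$ at all levels $\beta>\alpha$, and then to let $\beta\downarrow\alpha$. First I would fix $\alpha$, two distinct indices $i\neq j$, and a functional $x^*\in\mathbb R^*$; by Definition \ref{def:uncorrelated} it then suffices to show that the real random variables $s(x^*,X^i_{\alpha+})$ and $s(x^*,X^j_{\alpha+})$ are uncorrelated. Before that I would record that each $X^n_{\alpha+}=\mathrm{cl}(\bigcup_{\beta>\alpha}X^n_\beta)$ is genuinely a $K_{kc}(\mathbb R)$-valued random variable: it is convex as the closure of an increasing union of convex sets, compact as a closed subset of the compact support $X^n_{0+}$, and measurable because, by Lemma \ref{lem1}, it is the $d_H$-limit of the measurable maps $X^n_\beta$ as $\beta\downarrow\alpha$.

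The key computation is the identity
$$s(x^*,X^n_{\alpha+})=\sup_{\beta>\alpha}s(x^*,X^n_\beta)=\lim_{\beta\downarrow\alpha}s(x^*,X^n_\beta),$$
which holds because the support function commutes with closures and unions, and because the level sets $X^n_\beta$ increase as $\beta$ decreases, so that $s(x^*,X^n_\beta)$ increases to its supremum. Choosing a sequence $\beta_m\downarrow\alpha$ with $\beta_m>\alpha$, I obtain $s(x^*,X^i_{\beta_m})\uparrow s(x^*,X^i_{\alpha+})$ and $s(x^*,X^j_{\beta_m})\uparrow s(x^*,X^j_{\alpha+})$ pointwise. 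Each such random variable is dominated in absolute value by $|x^*|\,\|X^n_{0+}\|_{\bf K}$, which is integrable by integrable boundedness, so dominated convergence gives $E[s(x^*,X^i_{\beta_m})]\to E[s(x^*,X^i_{\alpha+})]$ and likewise for $j$.

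Finally I would transfer the vanishing of the covariance. For every $m$ the hypothesis gives
$$E\big[s(x^*,X^i_{\beta_m})\,s(x^*,X^j_{\beta_m})\big]=E\big[s(x^*,X^i_{\beta_m})\big]\,E\big[s(x^*,X^j_{\beta_m})\big].$$
Letting $m\to\infty$, the right-hand side converges to $E[s(x^*,X^i_{\alpha+})]\,E[s(x^*,X^j_{\alpha+})]$ by the previous paragraph, and I would argue that the left-hand side converges to $E[s(x^*,X^i_{\alpha+})\,s(x^*,X^j_{\alpha+})]$, which yields $\mathrm{Cov}(s(x^*,X^i_{\alpha+}),s(x^*,X^j_{\alpha+}))=0$, as required for every $x^*$, $i\neq j$, and $\alpha$.

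The hard part will be the convergence of the product expectations: the integrands $s(x^*,X^i_{\beta_m})\,s(x^*,X^j_{\beta_m})$ converge pointwise to $s(x^*,X^i_{\alpha+})\,s(x^*,X^j_{\alpha+})$, but to interchange limit and expectation I need an integrable majorant for the products, the natural candidate being $|x^*|^2\,\|X^i_{0+}\|_{\bf K}\,\|X^j_{0+}\|_{\bf K}$. Mere integrable boundedness ($L^1$) does not guarantee that this product is integrable, so here I would invoke the square-integrability that is already implicit in speaking of \emph{uncorrelated} random variables, namely that each $\|X^n_{0+}\|_{\bf K}\in L^2$; then the majorant lies in $L^1$ by Cauchy--Schwarz and dominated convergence applies. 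An alternative avoiding a global $L^2$ hypothesis is to split $f_m g_m-fg=f_m(g_m-g)+(f_m-f)g$ and bound each term separately, but this too rests on some uniform integrability of the products, which is precisely the crux of the argument.
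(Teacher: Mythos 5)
Your proposal follows essentially the same route as the paper: pick a sequence $\beta_m\downarrow\alpha$, use Lemma \ref{lem1} to get $d_H$-convergence of the level sets and hence pointwise convergence of the support functions, and pass the vanishing covariance to the limit. In fact you are more careful than the paper at the one delicate step --- the paper simply invokes ``the monotone convergence theorem'' to interchange $\lim_j$ and $\mathrm{Cov}$, whereas you correctly identify that the convergence of the product expectations needs an integrable majorant such as $\|X^i_{0+}\|_{\bf K}\,\|X^j_{0+}\|_{\bf K}$, supplied by the square-integrability implicit in speaking of uncorrelated (finite-covariance) variables together with Cauchy--Schwarz and dominated convergence.
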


\begin{proof} Take a decreasing sequence $\{\alpha_j\}\subset (0,1]$ such that it
converges to $\alpha$. Then
$X^n_{\alpha+}=cl(\bigcup_jX^n_{\alpha_j}),\
X_{\alpha+}^m=cl(\bigcup_jX_{\alpha_j}^m)$ for $m, n\in \mathbb N$. By Lemma \ref{lem1},
 it holds that
$$\lim\limits_jd_H(X^n_{\alpha_j},X^n_{\alpha+})=0,\
\ \ \lim\limits_jd_H(X_{\alpha_j}^m,X_{\alpha+}^m)=0.$$
Furthermore, for each $x^*\in\mathbb R^*$
$$\lim\limits_js(x^*, X^n_{\alpha_j})=s(x^*, X^n_{\alpha+}),\
\ \ \lim\limits_js(x^*, X^m_{\alpha_j})=s(x^*, X^m_{\alpha+}).$$
By Definition \ref{def:uncorrelated}, we know that
$$Cov\Big(s(x^*,X^n_\alpha), s(x^*,X^m_\alpha)\Big)=0. $$
Therefore, by the monotone convergence theorem,  we have
\begin{eqnarray*}
Cov\Big(s(x^*,X^n_{\alpha+}), s(x^*,X^m_{\alpha+})\Big)&=&Cov\Big(\lim\limits_js(x^*, X^n_{\alpha_j}), \lim\limits_js(x^*, X^m_{\alpha_j})\Big)\\
&=&\lim\limits_jCov\Big(s(x^*, X^n_{\alpha_j}), s(x^*, X^m_{\alpha_j})\Big)\\
&=&0.
\end{eqnarray*}
That shows the uncorrelation of the sequence $\{X^n_{\alpha+}:n\in \mathbb N\}$ for any $\alpha\in(0,1]$.
\end{proof}

\begin{theorem} \label{thm:2}
Let $\{X^n:n\in \mathbb N\}$ be a sequence of uncorrelated ${\mathcal F}_{kc}(\mathbb R)$-valued random variables such that for each n,
  $Var(s(x^*,X^{n}_\alpha))$ exists and for any $x^*\in \mathbb R^*$ ,
\begin{equation}\label{eqn:condition}
\frac{1}{n^2}\sum_{k=1}^{n}Var(s(x^*,X^k_\alpha))\longrightarrow0\ as \  n\rightarrow\infty.
\end{equation}
Then
\begin{equation}\label{eqn:result1}
P\Big\{d_{H}^\infty\Big(\frac{1}{n}\sum\limits_{k=1}^n X^{k},\frac{1}{n}\sum\limits_{k=1}^n E[X^{k}]\Big)>\varepsilon\Big\}\longrightarrow0 \ as \  n\rightarrow\infty.
\end{equation}
\end{theorem}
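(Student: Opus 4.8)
The plan is to transfer the whole problem to the support functions of the convex level sets, where it becomes a statement about averages of real-valued random variables, and then to pay for the supremum over $\alpha$ by a monotonicity argument resting on Proposition \ref{proposition1}.

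First I would use that, for compact convex subsets of $\mathbb R$, the Hausdorff metric is recovered from the support function, $d_H(A,B)=\sup_{x^*\in S^*}|s(x^*,A)-s(x^*,B)|$, and that in $\mathbb R$ the unit sphere $S^*$ reduces to the two points $x^*=\pm1$. Hence
\[
d_H^\infty\Big(\tfrac1n\sum_{k=1}^nX^k,\tfrac1n\sum_{k=1}^nE[X^k]\Big)=\sup_{\alpha\in(0,1]}\ \max_{x^*\in\{-1,1\}}\Big|s\big(x^*,\tfrac1n\sum_k X^k_\alpha\big)-s\big(x^*,\tfrac1n\sum_k E[X^k_\alpha]\big)\Big|.
\]
By additivity and positive homogeneity of the support function, together with $s(x^*,E[X^k_\alpha])=E[s(x^*,X^k_\alpha)]$ (the support function of an Aumann integral of convex sets), the inner quantity equals $\big|\tfrac1n\sum_k(s(x^*,X^k_\alpha)-E[s(x^*,X^k_\alpha)])\big|$, a centered average of real random variables.

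For a fixed pair $(\alpha,x^*)$ I would then apply Chebyshev's inequality. By Definition \ref{def:uncorrelated} the variables $s(x^*,X^k_\alpha)$, $k=1,\dots,n$, are pairwise uncorrelated, so the variance of their average is exactly $\tfrac1{n^2}\sum_{k=1}^n Var(s(x^*,X^k_\alpha))$, and hypothesis \eqref{eqn:condition} forces this to vanish. Thus for each fixed $\alpha$ and each $x^*\in\{-1,1\}$ the inner deviation tends to $0$ in probability, and a union bound extends this to any finite collection of levels.

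The main obstacle is the supremum over the uncountable index set $\alpha\in(0,1]$, which cannot be passed through Chebyshev directly. Here I would exploit monotonicity: for each $\omega$ and each $x^*$ the map $\alpha\mapsto s(x^*,X^k_\alpha)$ is non-increasing (the level sets shrink as $\alpha$ grows), hence so are the random average $R_n(\alpha):=\tfrac1n\sum_k s(x^*,X^k_\alpha)$ and its mean $\rho_n(\alpha):=E[R_n(\alpha)]$. Fixing a partition $0<\alpha_1<\cdots<\alpha_M=1$ and using the sandwich inclusions $X^k_{\alpha_j}\subset X^k_\alpha\subset X^k_{\alpha_{j-1}}$ for $\alpha\in[\alpha_{j-1},\alpha_j]$, Proposition \ref{proposition1} bounds the Hausdorff distance at an arbitrary level by the distances at the two neighbouring grid levels; equivalently, the monotonicity of $R_n$ and $\rho_n$ yields
\[
\sup_{\alpha\in(0,1]}|R_n(\alpha)-\rho_n(\alpha)|\le \max_{0\le j\le M}|R_n(\alpha_j)-\rho_n(\alpha_j)|+\max_{1\le j\le M}\big(\rho_n(\alpha_{j-1})-\rho_n(\alpha_j)\big).
\]
The first term is handled by the previous paragraph, being a maximum over finitely many levels. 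The genuinely delicate point is the second, deterministic term, the oscillation of the averaged mean level function across a cell. I would make it smaller than $\varepsilon$ by refining the partition; since each $\alpha\mapsto E[s(x^*,X^k_\alpha)]$ is monotone with total variation at most $2E\|X^k_{0+}\|_{\bf K}$, the integrable boundedness of the $X^k$ keeps the number $M$ of grid points controlled, so that the union bound over the grid does not overwhelm the decay supplied by \eqref{eqn:condition}. Reconciling the pointwise-in-$\alpha$ variance hypothesis with the need for a grid whose cardinality stays uniform in $n$ is the part demanding the most care, and it is precisely where the monotone structure, Proposition \ref{proposition1}, and Lemma \ref{lem1} together with Lemma \ref{lem2} (for passing to the $\alpha+$ levels and controlling right-hand limits) do the essential work.
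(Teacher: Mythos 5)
Your proposal follows essentially the same route as the paper: reduce to the $\alpha$-level sets, establish a weak law at each fixed level from uncorrelatedness via Chebyshev, and then pay for the supremum over $\alpha$ with a finite partition of $(0,1]$, the sandwich inclusions between neighbouring grid levels, and Proposition \ref{proposition1}. The only substantive difference is in the level-wise step: where you argue directly through support functions (noting that in $\mathbb R$ the sphere $S^*$ reduces to $\{\pm1\}$, so the Hausdorff distance of convex compacta becomes a maximum of two centered averages of real random variables) and apply Chebyshev by hand, the paper simply invokes Theorem 3.2 of \cite{Guan2020} for uncorrelated set-valued random variables at the levels $\alpha$ and $\alpha+$, after using Lemmas \ref{lem1} and \ref{lem2} to transfer the variance hypothesis and the uncorrelatedness to the $\alpha+$ levels, exactly as you indicate. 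Your inlined argument is the more self-contained of the two and buys nothing less. The point you single out as the most delicate --- that the partition is built from $\frac1n\sum_{i}E[X^i]$ and therefore moves with $n$, while the hypothesis \eqref{eqn:condition} and the level-wise limits are only pointwise in $\alpha$ --- is indeed the thinnest step; the paper's own proof chooses the partition in \eqref{eqn:3} for the $n$-dependent mean and then applies the fixed-level statements \eqref{eqn:1} and \eqref{eqn:2} at those grid points without comment, so your explicit (if not fully closed) discussion of how to keep the grid under control goes beyond what the paper records rather than falling short of it.
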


\begin{proof} {\em step 1:}

Firstly we prove that for any $\alpha\in(0,1]$,
$$\frac{1}{n^2}\sum_{k=1}^{n}Var(s(x^*,X^k_{\alpha+}))\longrightarrow0\ as \  n\rightarrow\infty.$$
Take a decreasing sequence $\{\alpha_j\}\subset(0,1]$, which
converges to $\alpha$.
$X^n_{\alpha+}=cl(\bigcup_jX^n_{\alpha_j}).$ By
Lemma \ref{lem1}, we have
$$\lim\limits_jd_H(X^n_{\alpha_j},X^n_{\alpha+})=0.$$
and
$$\lim\limits_js(x^*,X^n_{\alpha_j})=s(x^*, X^n_{\alpha+}).$$
Therefore, by the monotone convergence theorem and the condition \eqref{eqn:condition}, we have
\begin{eqnarray*}
\frac{1}{n^2}\sum_{k=1}^{n}\lim\limits_jVar(s(x^*,X^n_{\alpha_j}))&=&\frac{1}{n^2}\sum_{k=1}^{n}Var(s(x^*, X^n_{\alpha+}))\\
&\longrightarrow&0\ as \  n\rightarrow\infty.
\end{eqnarray*}
By Lemma \ref{lem2}, we know that both $\{X^n_{\alpha}:n\in \mathbb N\}$ and $\{X^n_{\alpha+}:n\in \mathbb N\}$ are sequences of uncorrelated set-valued random variables.
Then by theorem 3.2 of \cite{Guan2020}, we have
\begin{equation}\label{eqn:1}
P\Big\{d_{H}\Big(\frac{1}{n}\sum\limits_{k=1}^n X^{k}_\alpha,\frac{1}{n}\sum\limits_{k=1}^n E[X^{k}_\alpha]\Big)>\varepsilon\Big\}\longrightarrow0 \ as \  n\rightarrow\infty.
\end{equation}
and
\begin{equation}\label{eqn:2}
P\Big\{d_{H}\Big(\frac{1}{n}\sum\limits_{k=1}^n X^{k}_{\alpha+},\frac{1}{n}\sum\limits_{k=1}^n E[X^{k}_{\alpha+}]\Big)>\varepsilon\Big\}\longrightarrow0 \ as \  n\rightarrow\infty.
\end{equation}

{\em step 2:}

Take $\varepsilon>0$, and apply Lemma 2 for
$v_{n}=\frac{1}{n}\sum\limits_{i=1}^{n}E[X^{i}]$.
Then we can find a sequence
$0=\alpha_{0}<\alpha_{1}<...<\alpha_{m}=1$ such
that
\begin{equation}\label{eqn:3}
d_{H}\Big((\frac{1}{n}\sum\limits_{i=1}^{n}E[X^{i}])_{\alpha_{k}},\
(\frac{1}{n}\sum\limits_{i=1}^{n}E[X^{i}])_{\alpha_{k-1}+}\Big)<\varepsilon.
\end{equation}
Hence, by virtue of monotone property of level sets and the
above results, we have
$$(\frac{1}{n}\sum\limits_{i=1}^{n}X^{i})_{\alpha}=\frac{1}{n}\sum\limits_{i=1}^{n}
X^{i}_{\alpha},\\
(\frac{1}{n}\sum\limits_{i=1}^{n}E[X^{i}])_{\alpha}=\frac{1}{n}
\sum\limits_{i=1}^{n}E[
X^{i}_{\alpha}],$$
 For
$\alpha_{k-1}<\alpha\leq \alpha_{k} $, we have
$$
(\frac{1}{n}\sum\limits_{i=1}^{n}X^{i})_{\alpha_{k-1}+}\supseteq(\frac{1}{n}\sum\limits_{i=1}^{n}X^{i})_{\alpha}
\supseteq(\frac{1}{n}\sum\limits_{i=1}^{n}X^{i})_{\alpha_{k}}
$$
and
$$
(\frac{1}{n}\sum\limits_{i=1}^{n}E[X^{i}])_{\alpha_{k-1}+}\supseteq(\frac{1}{n}\sum\limits_{i=1}^{n}E[X^{i})]_{\alpha}
\supseteq(\frac{1}{n}\sum\limits_{i=1}^{n}E[X^{i}])_{\alpha_{k}}.
$$
Then by Proposition \ref{proposition1}, we obtain
\begin{eqnarray*}
&&d_{H}\Big((\frac{1}{n}\sum\limits_{i=1}^{n}X^{i})_{\alpha},\
(\frac{1}{n}
\sum\limits_{i=1}^{n}E[X^{i}])_{\alpha}\Big)\\
 &&\leq
d_{H}\Big((\frac{1}{n}\sum\limits_{i=1}^{n}X^{i})_{\alpha_{k}},\
(\frac{1}{n}\sum\limits_{i=1}^{n}E[X^{i}])_{\alpha_{k-1}+}\Big)\\
&&\hspace{0.5cm}
+d_{H}\Big((\frac{1}{n}\sum\limits_{i=1}^{n}X^{i})_{\alpha_{k-1}+},\
(\frac{1}{n}\sum\limits_{i=1}^{n}E[X^{i}])_{\alpha_{k}}\Big)\\
&&\leq
d_{H}\Big(\frac{1}{n}\sum\limits_{i=1}^{n}X^{i}_{\alpha_{k}},\
\frac{1}{n}\sum\limits_{i=1}^{n}E[X^{i}]_{\alpha_{k}}\Big)\\
&&\hspace{0.5cm}+
d_{H}\Big(\frac{1}{n}\sum\limits_{i=1}^{n}X^{i}_{\alpha_{k-1}+},\
\frac{1}{n}\sum\limits_{i=1}^{n}E[X^{i}]_{\alpha_{k-1}+}\Big)\\
&&\hspace{1cm}+2
d_{H}\Big((\frac{1}{n}\sum\limits_{i=1}^{n}E[X^{i}])_{\alpha_{k}},\
(\frac{1}{n}\sum\limits_{i=1}^{n}E[X^{i}])_{\alpha_{k-1}+}\Big).\\
\end{eqnarray*}

Consequently,
\begin{eqnarray*}
&&d_{H}^{\infty}\left(\frac{1}{n}\sum\limits_{i=1}^{n}X^{i},
\frac{1}{n}\sum\limits_{i=1}^{n}E[X^{i}]\right)\\
&&=\sup\limits_{\alpha\in(0,1]}d_{H}\Big((\frac{1}{n}\sum\limits_{i=1}^{n}X^{i})_{\alpha},\
(\frac{1}{n}
\sum\limits_{i=1}^{n}E[X^{i}])_{\alpha}\Big) \\
&&\leq\max\limits_{1\leq k\leq
m}d_{H}\Big(\frac{1}{n}\sum\limits_{i=1}^{n}X^{i}_{\alpha_{k}},\
\frac{1}{n}\sum\limits_{i=1}^{n}E[X^{i}_{\alpha_{k}}]\Big)\\
&&\hspace{0.5cm}+\max\limits_{1\leq k\leq m}
d_{H}\Big(\frac{1}{n}\sum\limits_{i=1}^{n}X^{i}_{\alpha_{k-1}+},\
\frac{1}{n}\sum\limits_{i=1}^{n}E[X^{i}_{\alpha_{k-1}+}]\Big)\\
&&+2\max\limits_{1\leq k\leq m}d_{H}\Big((\frac{1}{n}\sum\limits_{i=1}^{n}E[X^{i}])_{\alpha_{k}},\
(\frac{1}{n}\sum\limits_{i=1}^{n}E[X^{i}])_{\alpha_{k-1}+}\Big)\\
\end{eqnarray*}
By  using \eqref{eqn:1}, \eqref{eqn:2} and \eqref{eqn:3}, for any given  positive number $\varepsilon$, we obtain
\begin{equation*}
P\Big\{d_{H}^\infty\Big(\frac{1}{n}\sum\limits_{k=1}^n X^{k},\frac{1}{n}\sum\limits_{k=1}^n E[X^{k}]\Big)>\varepsilon\Big\}\longrightarrow0 \ as \  n\rightarrow\infty.
\end{equation*}
\end{proof}
\section*{Concluding remark}
As a manner similar to the uncorrelated set-valued random variables, we proposed uncorrelated fuzzy random variables by considering the $\alpha$-level sets. Uncorrelation is weaker than independence. For the sequence of uncorrelated fuzzy random variables, we proved a weak law of large numbers, which is an extension of weak law of large numbers for independent fuzzy random variables. With the development of technology, complex and big data are produced and obtained. Fuzzy statistics is a nice tool to deal with complex data. We wish our result will be applicable in fuzzy statistics.
\section*{Acknowledgment}
 This work is partly supported by the National Social Science Fund of
China No.19BTJ017(Li Guan), Natural Science Foundation of Beijing Municipality (No.1192015) (Jinping Zhang).



%
%

\end{document}